\DeclareMathOperator{\Hom}{Hom}
\DeclareMathOperator{\Ext}{Ext}
\DeclareMathOperator{\Tor}{Tor}
\newcommand{\spec}[0]{\operatorname{Spec}}
\newtheorem{Thm}{Theorem}[section]
\newtheorem{Cor}[Thm]{Corollary}
\newtheorem{Lem}[Thm]{Lemma}
\newtheorem{Prop}[Thm]{Proposition}
\theoremstyle{definition}
       \newtheorem{Rmk}[Thm]{Remark}
        \newtheorem{lemma}[Thm]{Lemma}
\title{Local Cohomology and Base Change}
\author{Karen E. Smith}
\date{\today}
\begin{document}

\begin{abstract}
 Let  $X \overset{f}\longrightarrow S$ be  a  morphism of  Noetherian schemes, with  $S$ reduced. For any closed   subscheme  $Z$  of $X$ finite over $S$, let  $j$ denote the open immersion $X\setminus Z \hookrightarrow X$.  Then for any coherent sheaf 
  $\mathcal F$  on $X\setminus Z$ and any index $r\geq 1$,   the sheaf  $f_*(R^rj_*\mathcal F)$ is generically free on $S$ and commutes with base change. We prove this by proving a related statement about local cohomology:   Let $ R$ be  Noetherian  algebra over a Noetherian domain $A$,  and let $I \subset R$ be an ideal  such that $ R/I $ is  finitely generated as an $A$-module. Let $M$ be a   finitely generated $R$-module. Then there exists a non-zero $g \in A$ such that  the local cohomology modules $H^r_I(M) \otimes_A A_g$ are   free over $A_g$ and for any ring map  $A\rightarrow L$ factoring through $A_g$, we have
$H^r_I(M) \otimes_A L \cong H^r_{I{\otimes_A}L}(M\otimes_A L)$ for all $r$.  \end{abstract}




\thanks{Thanks to J\'anos Koll\'ar for  encouraging me to write this paper,  for his insistence that I establish the more general version of the theorem,  and for sharing his insight as to why my arguments should go through more generally, especially the idea to reduce to the affine case in Section 2. I'd also like to thank 
  Mel Hochster for listening to my arguments,  Karl Schwede for reading a draft and suggesting the reference \cite{DGI}, and Barghav Bhatt for his interest.
  Financial support partially from NSF 
   DMS-1501625.}

\maketitle

\section{Introduction}
In his work on maps between local Picard groups, Koll\'ar was led to investigate the behavior of certain cohomological functors  under base change  \cite{kollar}. The following theorem directly answers a question he had posed: 

\begin{Thm}
 \label{free+comm.main.ptop.karen}   Let $X\overset{f}\to S$ be  a  morphism of Noetherian schemes, with  $S$ reduced. Suppose that $Z\subset X$ is closed subscheme finite over $S$, and let $j$ denote the  open embedding of its complement $U$. 
 Then for any coherent sheaf   $\mathcal F$  on $U,$ the
sheaves 
 $f_*(R^rj_*\mathcal F)$ are generically free and commute with base change for all $r\geq 1$.
\end{Thm}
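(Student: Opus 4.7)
Following the roadmap laid out by the abstract, my plan is to reduce Theorem~\ref{free+comm.main.ptop.karen} to the algebraic statement about local cohomology modules, and then prove that statement using generic freeness.

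\emph{Reduction to the algebraic statement.} Generic freeness and compatibility with base change are local on $S$, so replacing $S$ by a dense open in any one of its irreducible components I may assume $S=\Spec A$ with $A$ a Noetherian domain. Since $R^r j_* \mathcal F$ is supported on $Z$ for $r\geq 1$ (its restriction to $U$ vanishes), the sheaf $f_*(R^r j_* \mathcal F)$ depends only on a neighborhood of $Z$ in $X$. Following Koll\'ar's suggestion acknowledged in the introduction, I would cover such a neighborhood by finitely many affine opens $\{X_i\}$ and compute $\Gamma(X,R^r j_*\mathcal F)$ from the local pieces via the \v{C}ech resolution; generic freeness and base change on $S$ are preserved under such finite assembly. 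In the affine case $X=\Spec R$, extending $\mathcal F$ to a coherent $\widetilde M$ on $X$ for a finitely generated $R$-module $M$, the long exact sequence of local cohomology together with the vanishing of $H^{\geq 1}(X,\widetilde M)$ on affine $X$ gives $f_*(R^r j_*\mathcal F)\cong H^r(U,\widetilde M)\cong H^{r+1}_I(M)$ for $r\geq 1$, turning Theorem~\ref{free+comm.main.ptop.karen} into the local cohomology statement of the abstract.

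\emph{Proof of the local cohomology statement.} I would compute $H^r_I(M)=\varinjlim_n\Ext^r_R(R/I^n,M)$. The hypothesis that $R/I$ is finite over $A$ propagates to each $R/I^n$ being finite over $A$, by induction on $n$ using the short exact sequences $0\to I^n/I^{n+1}\to R/I^{n+1}\to R/I^n\to 0$ and the finite generation of $I^n/I^{n+1}$ over $R/I$. Taking a resolution of $R/I^n$ by finitely generated free $R$-modules and applying $\Hom_R(-,M)$, each $\Ext^r_R(R/I^n,M)$ is a finitely generated $R$-module annihilated by $I^n$, hence finitely generated over $A$. Grothendieck's generic freeness lemma then produces, for each pair $(r,n)$, an element $g_{r,n}\in A$ such that $\Ext^r_R(R/I^n,M)_{g_{r,n}}$ is free over $A_{g_{r,n}}$.

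\emph{Main obstacle.} The central technical difficulty is to choose a \emph{single} $g\in A$, independent of $n$, so that the colimit $H^r_I(M)\otimes_A A_g=\varinjlim_n\Ext^r_R(R/I^n,M)_g$ is free over $A_g$. My plan is to show that after inverting a carefully chosen $g$, each transition map $\Ext^r_R(R/I^n,M)_g\to\Ext^r_R(R/I^{n+1},M)_g$ becomes a split inclusion of free $A_g$-modules onto a direct summand, so that the colimit is itself free over $A_g$. To achieve such uniformity in $n$, I would exploit that the associated graded ring $\gr_I R$ is a finitely generated $A$-algebra (generated over $R/I$ by $I/I^2$), so that the whole direct system is controlled by a finite amount of data, allowing a single $g$ to suffice. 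Once freeness (hence flatness) of $H^r_I(M)$ over $A_g$ is established, the base change identity $H^r_I(M)\otimes_A L\cong H^r_{I\otimes_A L}(M\otimes_A L)$ follows by applying $-\otimes_A L$ to the \v{C}ech complex $\check C^\bullet(f_1,\ldots,f_k;M)$ computing $R\Gamma_I(M)$: this complex commutes term-by-term with base change, and flatness of cohomology ensures that taking cohomology commutes with tensoring.
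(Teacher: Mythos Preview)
Your reduction to the affine local-cohomology statement is essentially the paper's, though the paper reduces to affine $X$ more directly by observing that $R^rj_*\mathcal F$ is a module over $\varprojlim \mathcal O_X/\mathcal I_Z^t$ and hence is determined by any affine open containing the generic points of $Z$; your \v{C}ech-cover sketch would need more care but is not the real issue.

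The real gap is in your proof of the algebraic statement. You correctly identify the obstacle---passing from freeness of each $\Ext^r_R(R/I^n,M)$ to freeness of the colimit---but you do not overcome it. Your proposal to make the transition maps $\Ext^r_R(R/I^n,M)_g\to\Ext^r_R(R/I^{n+1},M)_g$ into split inclusions is not justified: these maps need not even be injective, and invoking finite generation of $\gr_I R$ over $A$ does not by itself give a finitely-generated-$A$-algebra structure on the direct system $\{\Ext^r_R(R/I^n,M)\}_n$ to which generic freeness could be applied. This is precisely the point where a genuine idea is needed.

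The paper avoids this uniformity problem by a completely different mechanism. It first passes to the $I$-adic completion and then, via a finiteness lemma, to the case $R=A[[x_1,\dots,x_n]]$ with $I=(x_1,\dots,x_n)$. There it proves a \emph{relative local duality}
\[
H^i_I(M)\ \cong\ \Hom_A^{cts}\!\bigl(\Ext^{n-i}_R(M,R),\,A\bigr)\ =\ \varinjlim_t\,\Hom_A\!\bigl(\Ext^{n-i}_R(M,R)/I^t,\ A\bigr)
\]
generically on $A$. The point is that now only a \emph{single} finitely generated $R$-module $N=\Ext^{n-i}_R(M,R)$ enters, and your $\gr_I$-idea (which the paper does use, as its Lemma~4.2) applies cleanly: generic freeness of $\gr_I N$ over $A$ gives simultaneous freeness of all $N/I^tN$, whence the direct limit of their $A$-duals is visibly free and base-changes correctly. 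So the paper trades your infinite family $\{\Ext^r_R(R/I^n,M)\}_n$ for the quotients $\{N/I^tN\}_t$ of one Noetherian module, where the uniformity is easy; the cost is developing the relative Matlis/local duality that effects this trade.
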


  Our purpose in this note is to  prove this general statement. 
  Koll\'ar  himself had proved a special case  of this result in a more restricted setting  \cite[Thm 78]{kollar}.

We pause to say precisely what is meant by  {\it generically free and commutes with base change}.
Suppose $\mathcal H$ is a functor which,  for every morphism of schemes $X\rightarrow S$ and every quasi-coherent sheaf $\mathcal F$ on $X$, produces a quasi-coherent sheaf $\mathcal H(\mathcal F)$ on $S$.  We say $\mathcal H(\mathcal F)$ is {\it generically free} if  there exists a dense open set $S^0$ of $S$ over which the  $\mathcal O_S$-module
$\mathcal H(\mathcal F) $ is free.   If in addition, 
for every change of base  $T\overset{p}\longrightarrow S^{0}$, the natural map 
$$ p^*\mathcal H(\mathcal F) \rightarrow \mathcal H(p^*_X\mathcal F)
$$  of quasi-coherent  sheaves on $T$  is an isomorphism
(where  $p_X$ is the induced morphism $X\times_S T \rightarrow X$),  then we say that $\mathcal H(\mathcal F) $ is {\it generically free and commutes with base change}.
See \cite[\S 72]{kollar}.

\begin{Rmk}  We do not claim the $r=0$ case of Theorem \ref {free+comm.main.ptop.karen}; in fact, it is false. For a counterexample, consider the ring
 homomorphism splitting $\mathbb Z \hookrightarrow  \mathbb Z \times \mathbb Q \twoheadrightarrow   \mathbb Z.$ The
corresponding morphism of Noetherian schemes 
 $$Z = \spec (\mathbb Z) \hookrightarrow  X = \spec (\mathbb Z \times \mathbb Q)  \rightarrow S = \spec   \mathbb Z$$ satisfies the hypothesis of Theorem 
 \ref{free+comm.main.ptop.karen}. The open set $U = X \setminus Z$ is the component  $\spec \mathbb Q$ of $X$. The coherent sheaf determined by the module $\mathbb Q$ on $U$ is not generically free over $\mathbb Z$, since there is no open affine subset $\spec \mathbb Z[\frac{1}{n}]$ over which $\mathbb Q$ is a free module.  [In this case, the map $j$ is affine, so the higher direct image sheaves $R^pj_* \mathcal F$ all vanish for $p >0$.]

 On the other hand,  if $f$ is a map of finite type, then the $r=0$ case of Theorem  \ref {free+comm.main.ptop.karen}
  can be deduced from Grothendiecks's Lemma on Generic freeness; see Lemma \ref{genfree}.  
\end{Rmk}
  
  For the commutative algebraists, we record the following version of the main result, which is essentially just the statement in the affine case:
  \begin{Cor}\label{commalgcor}
  Let $A$ be a reduced Noetherian ring. Let $R$ be a Noetherian $A$-algebra with ideal $I \subset R$ such that the  induced map $A \rightarrow R/I$ is finite.
  Then for any Noetherian $R$ module $M$,   the local cohomology
  modules $
H^i_I(M)
$
are generically free and commute with base change over $A$ for all $i\geq 0$.  Explicitly, this means that there exists element $g$ not in any minimal prime of $A$ such that  the modules 
  $H^i_I(M) \otimes_A A_g$ are free over $A_g$, and that  for any algebra $L$ over $A_g$, the natural map 
$$
H^i_I(M) \otimes_A L \rightarrow H^i_I(M \otimes_A L) 
$$
  is an isomorphism. 
  \end{Cor}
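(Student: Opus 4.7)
I would first reduce to the case where $A$ is a Noetherian domain. Since $A$ is reduced Noetherian with finitely many minimal primes $\fp_1,\ldots,\fp_t$, there exists an element $g_0\in A\setminus\bigcup_i\fp_i$ such that $A_{g_0}\cong\prod_i(A/\fp_i)_{g_0}$ is a finite product of Noetherian domains. Because the conclusion is Zariski-local on $A$ and distributes over finite products, it suffices to assume $A$ is a Noetherian domain.

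Pick generators $I=(x_1,\ldots,x_d)$ and let $\check C^\bullet$ denote the \v Cech complex on $M$ with respect to $\underline x=(x_1,\ldots,x_d)$. Its cohomology computes $H^i_I(M)$, and since localization commutes with tensor products, $\check C^\bullet\otimes_A L\cong\check C^\bullet(M\otimes_A L;\underline x)$ in $R\otimes_A L$ for any $A$-algebra $L$. The key finiteness input is the identification $H^i_I(M)=\varinjlim_k\Ext^i_R(R/I^k,M)$. Because $R/I$ is finite over $A$, a d\'evissage along $0\to I^{j-1}/I^j\to R/I^j\to R/I^{j-1}\to 0$ shows each $R/I^k$ is also finite over $A$. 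Combined with the Noetherianness of $R$ and finite generation of $M$, each $\Ext^i_R(R/I^k,M)$ is finitely generated over $R$ and annihilated by $I^k$, hence finitely generated over $R/I^k$, and therefore over $A$. Grothendieck's generic freeness applied to each stage yields, for every pair $(i,k)$, some $g_{i,k}\in A$ making $\Ext^i_R(R/I^k,M)_{g_{i,k}}$ free over $A_{g_{i,k}}$.

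The \textbf{main obstacle} is to find a \emph{single} $g\in A$ that works uniformly in $k$ (and for the finitely many relevant $i\leq d$): the direct system does not stabilize in general, as illustrated already by $H^1_{(x)}(K[x])=\bigoplus_{n\geq 1}Kx^{-n}$, which is a proper colimit of its stages $\Ext^1_R(R/x^k,R)=R/x^k$. I would attack this by passing to the generic fiber $K=\operatorname{Frac}A$, where $H^i_{I_K}(M_K)$ is automatically a free $K$-vector space. Lifting a $K$-basis through the colimit gives elements $h_\alpha\in\Ext^i_R(R/I^{k_\alpha},M)$ and a tautological map $\bigoplus_\alpha A\cdot h_\alpha\to H^i_I(M)$ which becomes an isomorphism after tensoring with $K$. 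A Noetherian spreading-out argument, using that the transitions in the direct system are maps of finitely generated $A$-modules whose kernels and cokernels have proper closed support in $\Spec A$, together with generic freeness of each stage, should allow a single $g$ to be chosen after which this map becomes an isomorphism of $A_g$-modules.

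Finally, once $H^i_I(M)_g$ is free (in particular flat) over $A_g$, base change follows from a standard homological argument: for any $A_g$-algebra $L$, the natural map $H^i_I(M)\otimes_A L\to H^i_{IR_L}(M\otimes_A L)$ coincides with the comparison $H^i(\check C^\bullet)\otimes_A L\to H^i(\check C^\bullet\otimes_A L)$, which is an isomorphism provided the cohomology modules are flat and the \v Cech terms have vanishing higher Tor against $L$; the latter can be arranged either by a further generic localization or absorbed into $g$.
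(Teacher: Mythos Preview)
Your reduction to the case where $A$ is a Noetherian domain is correct and matches the paper's first move. The identification of the main obstacle is also right. But the proposed resolution of that obstacle has a real gap, and one of its supporting claims is false.

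You assert that the transition maps $\Ext^i_R(R/I^k,M)\to\Ext^i_R(R/I^{k+1},M)$ have kernels and cokernels supported on a proper closed subset of $\Spec A$. This fails already when $A=K$ is a field: with $R=K[x]$, $I=(x)$, $M=R$, the transition $\Ext^1_R(R/x^k,R)\cong R/x^k\to R/x^{k+1}\cong\Ext^1_R(R/x^{k+1},R)$ is multiplication by $x$, whose cokernel is $K$ for every $k$. So the transitions are genuinely non-isomorphisms at the generic point, and no localization of $A$ changes this. Consequently, in your spreading-out step the map $\bigoplus_\alpha A\cdot h_\alpha\to H^i_I(M)$ has torsion kernel and cokernel that are \emph{not} finitely generated over $A$, and nothing in the sketch explains why a single $g$ annihilates them. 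This is precisely the obstacle you flagged, and the argument as written does not get past it. (The final base-change paragraph is also shaky: the \v Cech terms $M_{x_{i_1}\cdots x_{i_p}}$ are not finitely generated over $A$, so ``further generic localization'' to make them $A$-flat is not available.)

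The paper takes a different route that supplies the missing uniformity. After reducing to a domain, it replaces $R$ by its $I$-adic completion (harmless for $H^i_I$) and shows $\hat R^I$ is finite over a power series ring $A[[x_1,\ldots,x_n]]$ (Lemma~\ref{power}), thereby reducing to that case. It then proves a relative local duality
\[
H^i_I(M)\ \cong\ \bigl[\Ext^{n-i}_R(M,R)\bigr]^{\vee_A},\qquad N^{\vee_A}:=\varinjlim_t\Hom_A(N/I^tN,\,A),
\]
so that $H^i_I(M)$ is the relative Matlis dual of the \emph{Noetherian} $R$-module $N=\Ext^{n-i}_R(M,R)$. The key finiteness is Lemma~\ref{moregenfree}: the entire family $\{N/I^tN\}_{t\ge 1}$ is \emph{simultaneously} generically free over $A$, because ordinary generic freeness applied to the single module $\gr_I N$ over the finitely generated $A$-algebra $\gr_I R$ handles all $t$ at once. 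After one localization, $N^{\vee_A}$ is then a directed union of finite free $A_g$-modules along split injections, hence free, and base change is immediate. The associated-graded trick is exactly the device that lets one invert a single element for all stages of the limit --- the ingredient your sketch lacks.
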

  
Some version of Theorem \ref{free+comm.main.ptop.karen}  may follow from known statements in the literature, but looking through works of Grothendieck (\cite{ResiduesAndDuality},  \cite{LocalCohomology}, \cite{SGA}) and  \cite{Conrad}, 
 I am not able to find it; nor presumably could Koll\'ar.   After this paper was written, I did find  a  related statement due to Hochster and Roberts \cite[Thm 3.4]{HR} in a special case, not quite strong enough to directly answer Koll\'ar's original question; furthermore, my proof is different and possibly of independent interest. In any case,  there may be value in the self-contained proof here, which 
 uses  a relative form of Matlis duality proved here using  
 only  basic results about  local cohomology  well-known to most commutative algebraists.
 
\section{Restatement of the Problem}

In this section, we show that 
Theorem \ref{free+comm.main.ptop.karen}  reduces to  the following special  statement:

\begin{Thm}\label{main2}
Let $A$ be a Noetherian domain.  Let $R = A[[x_1, \dots, x_n]]$ be a power series ring over $A$, and    $M$ a finitely generated $R$-module. 
Denote by $I$ the ideal $(x_1, \dots, x_n) \subset R$.  Then the local cohomology modules 
$$
H^i_I(M)
$$
are generically free over $A$ and commute with base change for all $i$.  
\end{Thm}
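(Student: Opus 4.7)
The plan is to identify $H^i_I(M)$ with a $\Tor$ into the relative Matlis-dualizing module $E:=H^n_I(R)$, filter $E$ by its finite $I^k$-torsion submodules, and reduce both generic freeness and base change to finitely many applications of Grothendieck's generic freeness for \emph{finitely generated} $A$-modules.

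A direct \v{C}ech computation on $x_1,\dots,x_n$ gives $H^q_I(R)=0$ for $q\ne n$, while $E$ is the free $A$-module
$E=\bigoplus_{\alpha\in\mathbb{Z}_{\le -1}^n}A\cdot x^{\alpha}$
on the inverse monomials, whose formation manifestly commutes with every base change on $A$. Take a resolution $F_\bullet\to M$ by finite rank free $R$-modules. Since $H^q_I(R^b)$ is concentrated in $q=n$, the spectral sequence of hyperlocal cohomology collapses to a single row and yields
\begin{equation*}
H^i_I(M)\;\cong\;\Tor^R_{n-i}(M,E)\;=\;H_{n-i}\bigl(F_\bullet\otimes_R E\bigr),
\end{equation*}
so $H^i_I(M)$ is the homology of a complex of free (though countable rank) $A$-modules.

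To extract finite data I exhaust $E=\varinjlim_k E[I^k]$, where $E[I^k]:=\Hom_R(R/I^k,E)$ is the finitely generated free $A$-module of rank $\binom{n+k-1}{n}$ consisting of the inverse monomials $x^{\alpha}$ with $\sum_i(-\alpha_i-1)<k$. Since $M$ is finitely presented, $\Tor^R_j(M,-)$ commutes with filtered colimits, so
\begin{equation*}
H^i_I(M)\;=\;\varinjlim_k\Tor^R_{n-i}(M,E[I^k])
\end{equation*}
is a filtered colimit of finitely generated $A$-modules. The successive quotients $Q_k:=E[I^{k+1}]/E[I^k]$ are free $A$-modules annihilated by $I$, so $Q_k\cong(R/I)^{r_k}$ with $r_k=\binom{n+k-1}{n-1}$, and consequently $\Tor^R_j(M,Q_k)\cong\Tor^R_j(M,A)^{r_k}$ is controlled by the \emph{finite} list of Koszul homology modules $\Tor^R_j(M,A)$, $0\le j\le n$, each of them finitely generated over $A$.

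Grothendieck's generic freeness, applied to this finite family, produces one $g\in A\setminus\{0\}$ on which every $\Tor^R_j(M,A)$---and hence every $\Tor^R_j(M,Q_k)$---is a free $A_g$-module whose formation commutes with base change $A_g\to L$. I then induct on $k$ via the long exact $\Tor$-sequences for $0\to E[I^k]\to E[I^{k+1}]\to Q_k\to 0$ to get freeness of each $\Tor^R_j(M,E[I^k])\otimes_A A_g$ and pass to the filtered colimit. \textbf{The main obstacle is this last induction:} for a \emph{single} $g$ to govern all $k$, one must show that the kernels and cokernels of the connecting maps in the $\Tor$-sequences remain free $A_g$-modules for every $k$ at once, and that the resulting extensions split so that the filtered colimit of free modules is again free. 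This is where the relative Matlis duality of the second step is decisive: the entire tower is built from the finitely many modules $\Tor^R_j(M,A)$, and each defining sequence $0\to E[I^k]\to E[I^{k+1}]\to Q_k\to 0$ is already $A$-split (because $Q_k$ is free over $A$), reducing all the $R$-module subtleties to a one-shot application of generic freeness on $A$.
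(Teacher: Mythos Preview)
Your identification $H^i_I(M)\cong\Tor^R_{n-i}(M,E)$ with $E=H^n_I(R)$ is correct and is exactly what the paper uses as well. The gap is in your final paragraph: the ``one-shot'' localization making the finitely many Koszul homology modules $\Tor^R_j(M,R/I)$ free does \emph{not} force every $\Tor^R_j(M,E[I^k])$ to be free over $A_g$. In the long exact sequence coming from $0\to E[I^k]\to E[I^{k+1}]\to Q_k\to 0$, the connecting homomorphism $\partial_k\colon\Tor^R_{j+1}(M,Q_k)\to\Tor^R_j(M,E[I^k])$ is an $A_g$-linear map between free $A_g$-modules whose image (a submodule of a free module over a domain that need not be a PID) and cokernel have no reason to be free; moreover this map genuinely varies with $k$, so each stage could demand a new localization. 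Your remark that the sequence $0\to E[I^k]\to E[I^{k+1}]\to Q_k\to 0$ is $A$-split is true but irrelevant: $\Tor^R$ does not see $A$-splittings, only $R$-splittings, and these sequences are certainly not $R$-split. So the claimed reduction to a finite family fails.

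The paper closes exactly this gap by passing to the other side of a duality. One shows (generically on $A$) that
\[
\Tor^R_{n-i}(M,E)\;\cong\;\Hom_R\bigl(\Ext^{n-i}_R(M,R),\,E\bigr)\;=\;\varinjlim_t\Hom_A\!\bigl(N/I^tN,\,A\bigr),\qquad N:=\Ext^{n-i}_R(M,R),
\]
a single Noetherian $R$-module. Now the infinite family $\{N/I^tN\}_{t\ge 1}$ \emph{is} governed by finitely many data: the associated graded $\gr_I N$ is finitely generated over $\gr_I R\cong(R/I)[y_1,\dots,y_n]$, a finitely generated $A$-algebra, so Grothendieck's generic freeness applied once to $\gr_I N$ makes every $I^tN/I^{t+1}N$, and hence every $N/I^tN$, free over $A_g$ simultaneously. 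The transition maps in the direct limit are then $A_g$-split injections (they are $A_g$-duals of the surjections $N/I^{t+1}N\twoheadrightarrow N/I^tN$ between finite free modules), so the limit is free and commutes with base change. The point is that the filtration by $I^tN$ on the \emph{Noetherian} side is controlled by a single finitely generated graded module, whereas your filtration by $E[I^k]$ on the Artinian side is not.
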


For basic definitions and properties of local cohomology modules, we refer to  \cite{LocalCohomology}.

\medskip

For the remainder of this section, we show how Theorem  \ref{free+comm.main.ptop.karen} and Corollary \ref{commalgcor} follow from Theorem  \ref{main2}.

First,  
Theorem \ref{free+comm.main.ptop.karen} is local on the base. 
 Because the scheme $S$ is reduced, it is the finite union of its irreducible components, each of which is reduced and irreducible, so it suffices to prove the result on each of them. 
 Thus we can immediately reduce to the case where $S = \spec A,$ for some  Noetherian domain $A$.

 We now reduce to the case where $X$ is affine as well. The coherent sheaf $\mathcal F$ on the open set $U$ extends to a coherent sheaf on $X$, which we also denote by $\mathcal F$. 
   To simplify notation, let us denote the sheaf 
 $R^rj_*\mathcal F$  by $\mathcal H$, which we observe vanishes outside the closed set $Z$.  Each section is annihilated by a power of the ideal $\mathcal I_Z$ of $Z$, so that  although the sheaf of abelian groups  $\mathcal H$ on $Z$ is not an $\mathcal O_Z$-module,  it does have the structure of a module over the sheaf of $\mathcal O_S$-algebras   $\varprojlim_t \frac{\mathcal O_X}{\mathcal I_Z^t}$, 
  which we denote $\widehat { \mathcal O_{X, Z};} $\,\, put differently, $\mathcal H$ can be viewed as sheaf on the formal scheme $\widehat X^{Z}$ over $S$.
Observe that  $i_*\widehat { \mathcal O_{X, Z}}_{|X^0} = 
 \widehat { \mathcal O_{X, Z}}$, \,  where $ X^0 \overset{i}\hookrightarrow X$ is the inclusion of any open set  $X^0 \subset X$ containing the generic points of all the components of $Z$. 
  This means that $\mathcal H$  is generically free as an  $\mathcal O_S$-module if and only if $\mathcal H_{|X^0}$  is, and there is 
  no loss of generality in replacing  $X$ by any such open set $X_0$.   In particular, we can choose such  $X^0$ to be affine, thus reducing the proof of Theorem \ref{free+comm.main.ptop.karen}  to the case where both $X$ and $S$ are affine.  

  We can now assume that $X \rightarrow S$ is the affine map of affine schemes corresponding to a ring homomorphism
  $A \rightarrow T$. In this case the  closed set $Z $ is defined by some ideal $I$ of $ T$. Because $Z$ is finite over $S = \spec A$, the
  composition $A \rightarrow T \rightarrow  T/I$  is a finite integral extension of $A$.   The  coherent sheaf $\mathcal F$ on $U$ extends  to a coherent sheaf $\mathcal F$ on $X$, which corresponds to a finitely generated $T$-module $M$.
Since $X = \spec T$ is affine, we have natural identifications  for $r \geq 1$
$$R^rj_*\mathcal F = H^{r}(X\setminus Z, \mathcal F) = H^{r+1}_I(M)$$ 
of modules over $T$  \cite[Cor 1.9]{LocalCohomology}. Thus we have reduced Theorem  \ref{free+comm.main.ptop.karen} to showing that if $T$ is a Noetherian ring over a Noetherian domain $A$ and $I$ is any ideal such that $T/I$ is  finitely generated as an $A$-module, then for any finitely generated $T$-module $M$, the modules $H^{r+1}_I(M)$ are generically free and commute with base change over $A$ for $ r \geq 1$. In fact, we will be able to show this for all indices $r\geq -1$.

To get to the power series case, we first observe that for all $i$, 
 every element of  $H^{i}_I(M)$  is annihilated by some power of $I$. This means that   $H^{i}_I(M)$ has the structure of a module over the $I$-adic completion $\hat T^I$.  
 There is no loss of generality in replacing $T$ and $M$  by their $I$-adic completions $\hat T^I$ and $\hat M^{ I}$---the module $H^{i}_I(M)$ is  canonically identified with $H^{i}_{I\hat T^ I}(\hat M^{I})$.  
 So with out loss of generality, we assume that $T$ is $I$-adically complete. 
 
 Now, Lemma \ref{power} below guarantees that $T$ is a module-finite algebra over a power series ring $
A[[x_1, \dots, x_n]] $. So the finitely generated $T$-module $M$ is also a finitely generated module over $A[[x_1, \dots, x_n]], $
and the computation of local cohomology is the same viewed over either ring. This means that to prove Theorem  \ref{free+comm.main.ptop.karen}, it  would suffice to prove Theorem \ref{main2}. It only remains to prove Lemma \ref{power}.

\begin{lemma}\label{power} Let $A$  be a Noetherian ring. Let $T$ be a Noetherian $A$-algebra containing an ideal $I$  such that the composition of natural maps
$ A \rightarrow T \twoheadrightarrow T/I $ is finite.  Then there is a natural  ring homomorphism from a power series ring
$$
A[[x_1, \dots, x_n]] \rightarrow \hat{ T}^I : = \varprojlim_t \, \, T/I^t$$
which is {\it also finite}.  
\end{lemma}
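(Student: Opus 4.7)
The plan is to first construct the map, then to establish finiteness by a successive-approximation argument based on the two completeness statements in play.

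Since $T$ is Noetherian, $I$ is finitely generated; pick $y_1,\dots,y_n \in T$ with $I = (y_1,\dots,y_n)$. The polynomial homomorphism $A[x_1,\dots,x_n] \to T$ sending $x_i \mapsto y_i$ carries $(x_1,\dots,x_n)^k$ into $I^k$ for every $k$, so composing with $T \to \hat T^I$ yields a continuous homomorphism which, by the universal property of $I$-adic completion, extends uniquely to a ring map
\[ \phi \colon A[[x_1,\dots,x_n]] \longrightarrow \hat T^I \]
that sends $\sum c_\alpha x^\alpha$ to the convergent sum $\sum c_\alpha y^\alpha$. Write $S = A[[x_1,\dots,x_n]]$ and $\mathfrak{n} = (x_1,\dots,x_n) \subset S$; then $\phi(\mathfrak{n})\hat T^I = I\hat T^I$, and the induced map $A = S/\mathfrak{n} \to \hat T^I/I\hat T^I = T/I$ is precisely the given finite map.

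To prove $\phi$ is finite, lift a finite $A$-module generating set $\bar t_1,\dots,\bar t_m$ of $T/I$ to elements $t_1,\dots,t_m \in T \subset \hat T^I$. I claim these generate $\hat T^I$ as an $S$-module via $\phi$. Given $u \in \hat T^I$, I will inductively produce $a_i^{(k)} \in \mathfrak{n}^k S$ (with $a_i^{(0)} \in A$) such that
\[ u - \sum_{i=1}^m \Bigl(\sum_{k=0}^N a_i^{(k)}\Bigr) t_i \in I^{N+1}\hat T^I \qquad \text{for every } N \geq 0. \]
The base case $N=0$ holds by the choice of the $\bar t_i$. For the inductive step, the current remainder lies in $I^{N+1}\hat T^I$, hence is a sum of terms $y^\alpha w$ with $|\alpha| = N+1$ and $w \in \hat T^I$; expanding each such $w$ modulo $I$ in terms of $\bar t_1,\dots,\bar t_m$ with coefficients in $A$ and collecting produces the desired $a_i^{(N+1)} \in \mathfrak{n}^{N+1} S$.

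Finally, since $S$ is $\mathfrak{n}$-adically complete, each series $a_i := \sum_{k\geq 0} a_i^{(k)}$ converges in $S$; and since $\hat T^I$ is $I$-adically complete and hence separated, the approximation holding for all $N$ forces $u = \sum_i \phi(a_i)\, t_i$. Thus the $t_i$ generate $\hat T^I$ over $S$, and $\phi$ is finite. The one delicate point is the inductive step—keeping track of both completion topologies so that the $A$-valued coefficients really assemble into an element of $\mathfrak{n}^{N+1} S$—but this is a mechanical expansion once the framework above is in place.
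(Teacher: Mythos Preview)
Your proof is correct and follows essentially the same approach as the paper: both construct the map by sending $x_i \mapsto y_i$ for a generating set of $I$, lift $A$-module generators $t_1,\dots,t_m$ of $T/I$, and show by induction on the $I$-adic level that these same elements generate. The only cosmetic difference is that the paper phrases the induction as ``$T/I^t$ is generated over $A[x]/(x)^t$ by the $t_i$ for each $t$, hence the same holds in the inverse limit,'' whereas you spell out the successive-approximation explicitly and invoke completeness of $S$ and separatedness of $\hat T^I$ to assemble the limit; your version is arguably the more careful execution of the same idea.
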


\begin{proof}[Proof of Lemma]
Fix generators $y_1, \dots, y_n$ for the ideal $I$  of $T$. Consider the $A$-algebra homomorphism 
$$ A[x_1, \dots, x_n] \rightarrow T \,\,\,\,\,\,\,\,\, x_i \mapsto y_i.
$$
We will show that this map induces a ring homomorphism
$$
A[[x_1, \dots, x_n]] \rightarrow \hat{T}^{I}
$$
which is finite. First note that 
for  each  natural number $t$, there is a  naturally induced  ring
homomorphism 
\begin{equation}\label{eq1} \frac{A[x_1, \dots, x_n]}{(x_1, \dots, x_n)^t} \rightarrow  \frac{T}{I^t}  
\end{equation}
sending the class $ \overline{x_i} $ to the class $\overline {y_i}.$

\smallskip
\noindent
{\bf Claim: } 
For  every  $t$, the map  (\ref{eq1}) is {\it finite}. Indeed,  if  $t_1, \dots, t_d$  are elements of $T$ whose classes  modulo $I$  are $A$-module generators for $T/I$, then the classes of $t_1, \dots, t_d$ modulo $I^t$ are generators for  $T/I^t  $  as a module over 
$ A[x_1, \dots, x_n]/ (x_1, \dots, x_n)^t.$

\medskip
The claim is straightforward to prove using induction on  $t$ and  the exact sequence
$$
0 \rightarrow I^t/I^{t+1} \rightarrow T/I^{t+1} \rightarrow T/I^t \rightarrow 0.
$$
We leave these details to the reader.

Now to prove the lemma, we take the direct limit of the maps (\ref{eq1}).  Since at every stage, the target is generated over the source by the classes of $t_1, \dots, t_d$, also in the limit, $\hat{T}^{ I}$ will be generated over $A[[x_1, \dots, x_n]]$ by the images of $t_1, \dots, t_d$. So the induced ring homomorphism $ A[[x_1, \dots, x_n]] \rightarrow T$
is finite. 
\end{proof}

\medskip
Having reduced the proof of the main results discussed in the introduction
 to Theorem \ref{main2},   the rest of the paper focuses on the local cohomology statement in the special case. 
Our proof of Theorem \ref{main2} uses an $A$-relative version of Matlis duality 
to convert the problem  to an analagous one for finitely generated modules over a power series ring, where it will follow from the theorem on generic freeness. This relative version of Matlis duality might be of interest to commutative algebraists in other contexts, and holds in greater generality than what we develop here. To keep the paper as straightforward and readable as possible, we have chosen to present it only in the case we need to prove the main result. 
Some related duality is worked out in \cite{DGI}.

\section{A Relative Matlis Dual Functor}\subsection{Matlis Duality}
We first recall the classical Matlis duality in the complete local (Noetherian) case. 

Let $(R, m)$ be a complete local ring, and let $E$ be an injective hull of its residue field $R/m$. The Matlis dual functor $\Hom_{R}(-, E)$ is an exact  contravariant  functor on the category of $R$-modules. It takes  each Noetherian  $R$-module (i.e., one satisfying the ascending chain condition) to an Artinian $R$-module (i.e., one satisfying the descending chain condition) and vice-versa.  Moreover, for any Artinian  or Noetherian 
$R$-module $\mathcal H$, we have a natural isomorphism
 $\mathcal H \rightarrow \Hom_{R}( \Hom_{R}(\mathcal H, E), E).$  
  That is, the Matlis dual functor defines an equivalence of categories between the category of Noetherian and the category of Artinian $R$-modules. 
See \cite{LocalCohomology}, \cite[Thm 3.2.13]{BrunsHerzog} or \cite{Hochster} for more on Matlis duality.

\subsection{A relative version of Matlis duality.}\label{notation}
Let $A$ be a domain.  Let $R$ be an $A$-algebra, with ideal $I\subset R$ such that $R/I$ is finitely generated as an $A$-module. 
Define the {\it relative Matlis dual functor} to be the functor
$$
\{R-modules\} \rightarrow \{R-modules\}\,\,\,\,\,\,$$
$$
M \mapsto  M^{\vee_A} :=
 \varinjlim_t \Hom_A(M/I^tM,\, A).
$$
We also denote  $M^{\vee_A}$ by $\Hom_A^{cts}(M,\, A),$ since it is the submodule of $\Hom_A(M,\, A)$  consisting of maps continuous in the  $I$-adic topology. 
That is,  $\Hom_A^{cts}(M,\, A)$ is the $R$-submodule of $\Hom_A(M,\, A)$ consisting of maps $\phi:M \rightarrow A$ satisfying $\phi(I^tM) = 0$ for some $t$.

\begin{Prop}\label{lem1}  Let $R$ be a Noetherian algebra over a Noetherian domain $A$, with ideal $I\subset R$ such that $R/I$ is finitely generated as an $A$-module.  
\begin{enumerate}
\item The   functor $Hom_A^{cts}(-, A)$ is naturally equivalent to the functor 
$$
M \mapsto  \Hom_{R}(M, \Hom_A^{cts}(R, A) ). $$ 
\item The  functor 
preserves exactness of sequences
$$ 0 \rightarrow M_1 \rightarrow M_2 \rightarrow M_3 \rightarrow 0
$$
 of finitely generated $R$-modules,
provided that the modules  $M_3/I^nM_3$ are  (locally) free $A$-modules for all $n \gg 0$.
\end{enumerate}
\end{Prop}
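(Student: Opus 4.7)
The plan is to handle (1) as a refinement of the standard Hom--tensor adjunction, and (2) by reducing modulo $I^n M_2$ and exploiting the hypothesis to kill the $\Ext^1_A$ obstruction that would otherwise block right-exactness of $\Hom_A(-,A)$.

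For (1), I would exhibit explicit mutually inverse natural transformations. In one direction, given $\phi \in \Hom_R(M, \Hom_A^{cts}(R, A))$, set $\Phi(\phi)(m) := \phi(m)(1)$; in the other, given $\psi \in \Hom_A^{cts}(M, A)$, set $\Psi(\psi)(m)(r) := \psi(rm)$. Both $R$-linearity and the fact that $\Psi(\psi)(m)$ lies in $\Hom_A^{cts}(R,A)$ are immediate (the latter from $\psi(I^t M) = 0$). The only point where an assumption on $M$ is used is in checking that $\Phi(\phi)$ is continuous on $M$: picking generators $m_1, \dots, m_k$ of $M$ over $R$ and continuity exponents $t_i$ with $\phi(m_i)(I^{t_i}) = 0$, the $R$-linearity of $\phi$ together with $I^t M = \sum_i I^t m_i$ gives $\Phi(\phi)(I^t M) = 0$ for $t = \max t_i$. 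Mutual inverseness is then a direct computation.

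For (2), reduce the SES $0 \to M_1 \to M_2 \to M_3 \to 0$ modulo $I^n M_2$ to obtain
\[
0 \to M_1/(M_1 \cap I^n M_2) \to M_2/I^n M_2 \to M_3/I^n M_3 \to 0.
\]
Apply $\Hom_A(-,A)$ and read off the long $\Ext_A$ sequence: right-exactness holds provided $\Ext^1_A(M_3/I^n M_3, A) = 0$, which is ensured for $n \gg 0$ by the local-freeness hypothesis (a finitely generated locally free module over a Noetherian ring is projective). Taking the filtered direct limit over $n$ (an exact functor) yields
\[
0 \to M_3^{\vee_A} \to M_2^{\vee_A} \to \varinjlim_n \Hom_A\!\bigl(M_1/(M_1 \cap I^n M_2), A\bigr) \to 0,
\]
and the last term equals $M_1^{\vee_A}$ by Artin--Rees: there is $c$ with $M_1 \cap I^n M_2 \subseteq I^{n-c} M_1$ for $n \geq c$, and $I^n M_1 \subseteq M_1 \cap I^n M_2$ trivially, so the two filtrations of $M_1$ are cofinal and induce the same $\varinjlim$.

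The main obstacle is the $\Ext^1_A$-vanishing step in (2): without the local-freeness hypothesis $\Hom_A(-,A)$ is only left exact, and one picks up an obstruction term that survives to the limit. The hypothesis is essentially tailor-made for this. A secondary but indispensable ingredient is Artin--Rees, without which the kernel $M_1 \cap I^n M_2$ cannot be directly compared to the $I$-adic filtration of $M_1$ and the third term in the limiting sequence cannot be identified as $M_1^{\vee_A}$.
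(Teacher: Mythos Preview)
Your proof is correct and follows essentially the same route as the paper. For (1) the paper simply invokes Hom--tensor adjunction ``easily observed to restrict to continuous maps,'' which is exactly what your explicit $\Phi$, $\Psi$ spell out. For (2) the paper uses precisely your two ingredients---Artin--Rees to compare $M_1\cap I^nM_2$ with $I^{n-c}M_1$, and projectivity of $M_3/I^nM_3$ to get exactness after $\Hom_A(-,A)$---only phrased as an element-by-element lift of a given $\phi\in\Hom_A^{cts}(M_1,A)$ rather than via the $\Ext$ long exact sequence and passage to the filtered colimit; the content is the same.
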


\begin{Rmk}
If $A = R/I$ is a  field, then the relative Matlis dual specializes to the  usual Matlis dual functor $\Hom_{R}(-, E)$, where $E$ is the injective hull of the residue field of 
$R$ at the maximal ideal $I $ (denoted here now $m$). Indeed, one easily checks that $ \Hom_A^{cts}(R, A)$ is an injective hull of $R/m$. To wit, the  $R$-module homomorphism
$$R/m \rightarrow  \Hom_A^{cts}(R, A) \,\,\,\,\,\,\,\,\,\,\,\,\,\, {\text{sending}}\,\,\,\,\,\,\,\,\,\,\,\,\,\,\,\,   r\mod  m  \mapsto 
 \begin{cases}    R \rightarrow A  \\ 
s \mapsto rs  \mod m
 \end{cases} 
$$ is a maximal essential extension of $R/m$. 
 \end{Rmk}

\begin{proof}[Proof of Proposition]
Statement (1) follows from  
  the adjointness of tensor and Hom, which is easily observed to restrict to the corresponding statement for modules of continuous maps.

  For (2), we need to
 show the sequence  remains exact after applying the relative Matlis dual functor. The functor $\Hom_A(-, A)$ preserves left exactness: that is, 
\begin{equation}\label{SEQ1}
 0 \rightarrow \Hom_A(M_3, A) \rightarrow\Hom_A(M_2, A)  \rightarrow \Hom_A(M_1, A)  
 \end{equation} 
 is exact.
 We want to show that, restricting to the submodules of  {\it continuous maps,} we also have exactness at the right. That is, we need the exactness of
\begin{equation}\label{sequence2}
 0 \rightarrow \Hom_A^{cts}(M_3, A) \rightarrow \Hom_A^{cts}(M_2, A)  \rightarrow \Hom_A^{cts}(M_1, A) \rightarrow 0.
 \end{equation}
 The exactness of  (\ref{sequence2})  at all spots except the right  is easy to verify using the description of a continuous map as one annihilated by a power of $I$.  
 
To check exactness of  (\ref{sequence2}) at the right, we use the Artin Rees Lemma  \cite[10.10]{AM}. 
 Take $\phi \in  \Hom_A^{cts}(M_1, A).$
 By definition of continuous, we 
 $\phi$ is annihilated by $I^n$ for sufficiently large $n$. 
  By  the Artin-Rees Lemma, there exists $t$ such that for all $n\geq t$, we have 
   $ I^{n+t}M_2\cap M_1 \subset I^nM_1$. This means we have a surjection
$$
M_1/ (I^{n+t}M_2\cap M_1) \twoheadrightarrow  M_1/I^nM_1.
$$
Therefore the composition 
$$
M_1/ I^{n+t}M_2\cap M_1 \twoheadrightarrow  M_1/I^nM_1 \rightarrow A
$$
gives a lifting of $\phi$ to an element $\phi'$  in $ \Hom_A(M_1/I^{n+t}M_2\cap M_1, A)$. 

Now note that for  $n \gg 0, $  we have exact sequences 
$$
0 \rightarrow M_1/M_1\cap I^{n+t}M_2 \rightarrow M_2/I^{n+t}M_2 \rightarrow M_3/I^{n+t}M_3 \rightarrow 0,$$
which are split over $A$ by our assumption that $M_3/I^{n+t}M_3 $ is projective. Thus
\begin{footnotesize}
\begin{equation}\label{sequence3}
0 \rightarrow \Hom_A( M_3/I^{n+t}M_3, A) \rightarrow  \Hom_A( M_2/I^{n+t}M_2, A) \rightarrow \Hom_A(M_1/M_1\cap I^{n+t}M_2, \, A) \rightarrow 0
\end{equation}
\end{footnotesize}
 is also split exact. This means  we can pull $\phi' \in \Hom_A( M_1/ I^{n+t}M_2\cap M_1,  \, A)$ back to some element $\tilde \phi$ in $\Hom_A(M_2/I^{n+t}M_2, A). $  So our original continuous map $M_1 \overset{\phi}\rightarrow A$ is the restriction of some map $ M_2 \overset{\tilde \phi}\rightarrow A$ which  satisfies $\tilde\phi (I^{n+t}M_2) = 0$.   This exactly says the continuous map  $\phi$ on $M_1$  extends to a continuous map  $\tilde\phi$ of $M_2$. That is, the sequence (\ref{sequence2}) is exact. 
\end{proof}

\begin{Rmk} If $M_3$ is a Noetherian module over a Noetherian algebra $R$ over the Noetherian domain $A$, then 
the assumption that $M_3/I^nM_3$ is locally free for all $n$ holds generically on $A$---that is, after inverting a single element of $A$. See Lemma \ref{moregenfree}. 
\end{Rmk}

\section{Generic Freeness}
We briefly review Grothendieck's idea of generic freeness, and use it to prove that the relative Matlis dual of a Noetherian $R$-module is generically free over $A$ (under suitable Noetherian hypothesis on $R$ and $A$). 
\smallskip

Let  $M$ be a module over a  commutative domain $A$. We say that $M$ is {\it generically free} over $A$ 
 if there exists a non-zero $g\in A$, such that  $M \otimes_A A_g$ is a free $A_g$-module, where $A_g$ denotes the localization of $A$ at the element $g$. 
 Likewise, a collection $\mathcal M$  of $A$-modules is {\it simultaneously generically free} over $A$  if there exists a non-zero $g\in A$, such that  $M \otimes_A A_g$ is a free for all 
 modules $M \in \mathcal M$. Note that any finite collection of generically free modules is always simultaneously generically free, since we can take $g$ to be the product the $g_i$ that work for each of the $M_i$.

 Of course, finitely generated $A$-modules are always generically free. 
  Grothendieck's  famous  {\bf Lemma on Generic Freeness}  ensures that many other modules  are as well:
\begin{lemma} \label{genfree} \cite[6.9.2]{EGA}
 Let $A$ be a Noetherian  domain. Let $M$ be any finitely generated module over a finitely generated $A$-algebra $T$. Then $M$ is generically free over $A$.
   \end{lemma}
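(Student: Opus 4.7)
The plan is to first reduce to the polynomial ring case: since $T$ is a finitely generated $A$-algebra, it is a quotient of $A[x_1,\dots,x_n]$ for some $n$, and $M$ becomes a finitely generated module over $A[x_1,\dots,x_n]$, so without loss of generality $T = A[x_1,\dots,x_n]$. I will induct on $n$. In both the base case and the inductive step, the central reduction is to a cyclic quotient $T/\mathfrak{p}$: since $T$ is Noetherian and $M$ is finitely generated, $M$ admits a finite filtration $0 = M_0 \subset M_1 \subset \cdots \subset M_r = M$ with each $M_i/M_{i-1} \cong T/\mathfrak{p}_i$ for some prime $\mathfrak{p}_i$ of $T$. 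If each quotient $T/\mathfrak{p}_i$ becomes free over $A_{g_i}$ for some nonzero $g_i \in A$, then setting $g = \prod g_i$ exhibits $M_g$ as a finite iterated extension of free $A_g$-modules; because free modules are projective each such extension splits, so $M_g$ is free over $A_g$. It therefore suffices to prove generic freeness for $M = T/\mathfrak{p}$.

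For the base case $n = 0$, so $T = A$: if $\mathfrak{p} = 0$, then $A/\mathfrak{p} = A$ is trivially free; if $\mathfrak{p} \neq 0$, pick any nonzero $g \in \mathfrak{p}$, so that $(A/\mathfrak{p})_g = 0$, which is free. This handles $n=0$.

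For the inductive step, assume the result for polynomial rings in at most $n-1$ variables. Let $B = A[x_1,\dots,x_n]$ and consider $M = B/\mathfrak{p}$. If $\mathfrak{p} \cap A \neq 0$, invert any nonzero element of $\mathfrak{p} \cap A$ to kill $M$ generically. If $\mathfrak{p} = 0$, then $M = B$ is already free over $A$ with basis the monomials in the $x_i$. The remaining case is $\mathfrak{p} \neq 0$ with $\mathfrak{p} \cap A = 0$; here I will invoke Noether normalization. Setting $K = \operatorname{Frac}(A)$, the ring $(B/\mathfrak{p}) \otimes_A K \cong K[x_1,\dots,x_n]/\mathfrak{p} K[x_1,\dots,x_n]$ is a finitely generated $K$-algebra which is a domain, and because $\mathfrak{p} \neq 0$ the extension $\mathfrak{p} K[x_1,\dots,x_n]$ is nonzero, forcing the transcendence degree $d$ of $(B/\mathfrak{p}) \otimes_A K$ over $K$ to satisfy $d \leq n-1$. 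Noether normalization then produces algebraically independent elements $y_1,\dots,y_d$, which can be taken inside $B/\mathfrak{p}$ after rescaling, such that $(B/\mathfrak{p}) \otimes_A K$ is a finite $K[y_1,\dots,y_d]$-module. Clearing denominators on a finite set of module generators produces a single $g \in A \setminus\{0\}$ such that $(B/\mathfrak{p})_g$ is finitely generated as a module over $A_g[y_1,\dots,y_d]$. Since $d < n$, applying the inductive hypothesis over the Noetherian domain $A_g$ yields generic freeness of $(B/\mathfrak{p})_g$ over $A_g$, and hence over $A$ after a further localization.

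The main obstacle is the descent step: one must verify that algebraically independent generators produced by Noether normalization over $K$ can be chosen inside $B/\mathfrak{p}$ itself and that a single denominator $g \in A$ suffices to make $(B/\mathfrak{p})_g$ finite over $A_g[y_1,\dots,y_d]$. Both are standard but require care, and the critical dimensional input is the observation that $\mathfrak{p} \neq 0$ genuinely forces the transcendence degree to drop strictly below $n$, which is exactly what allows the induction to close.
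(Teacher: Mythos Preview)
The paper does not supply its own proof of this lemma: it is quoted verbatim as Grothendieck's Lemma on Generic Freeness with a citation to \cite[6.9.2]{EGA}, and is used as a black box. So there is no paper proof to compare against.

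Your argument is correct and is one of the standard textbook proofs (close to the treatments in Matsumura and Eisenbud). The reductions are all sound: the prime filtration is available because $T$ is Noetherian and $M$ is finitely generated; the passage from generic freeness of the successive quotients to that of $M$ uses only that free modules are projective, so the extensions split after a single common localization; and the Noether-normalization descent works because the integrality equations for the images of $x_1,\dots,x_n$ over $K[y_1,\dots,y_d]$ involve only finitely many denominators from $A$. The one point you flagged as the ``main obstacle'' is indeed routine: the $y_i$ produced by Noether normalization over $K$ are $K$-polynomials in the $\bar x_j$, and scaling each by an element of $A$ lands them in $B/\mathfrak p$ without changing $K[y_1,\dots,y_d]$; then clearing the finitely many remaining denominators in the monic relations gives the required $g$. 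Your observation that $\mathfrak p\neq 0$ with $\mathfrak p\cap A=0$ forces $\mathfrak p K[x_1,\dots,x_n]$ to be a nonzero proper ideal, hence $d\le n-1$, is exactly what makes the induction terminate. No gaps.
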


We need a version of  Generic Freeness for certain infinite families of modules over more general $A$-algebras:  

 \begin{Lem}\label{moregenfree}
  Let $A$ be any domain. Let $T$ be any Noetherian $A$-algebra, and $I \subset T$ any ideal such that $T/I$ is finite over $A$. 
  Then  for any  Noetherian $T$-module $M$, 
  the  family of modules 
  $$\{ M/I^nM\, | \, n \geq 1\}$$
is simultaneously generically free  over $A$. That is,  after inverting a single element of $A$,   the modules  
  $M/I^nM$ for all  $n \geq 1$ become free  over $A$.  \end{Lem}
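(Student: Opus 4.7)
The plan is to pass to the associated graded module, where Grothendieck's Lemma on Generic Freeness (Lemma \ref{genfree}) becomes applicable, and then reassemble the result for each $M/I^nM$ by induction on $n$.

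\textbf{Step 1 (Finite generation of associated graded).} I would first form $G := \gr_I T = \bigoplus_{n \ge 0} I^n/I^{n+1}$ and $N := \gr_I M = \bigoplus_{n \ge 0} I^n M/I^{n+1}M$. Since $T$ is Noetherian, the ideal $I$ is finitely generated, say by $y_1,\ldots,y_r$; since $T/I$ is finite over $A$, choose $A$-module generators $t_1,\ldots,t_s$ for $G_0 = T/I$. Then $G$ is a finitely generated graded $A$-algebra, generated by the $t_i$ in degree $0$ and by the images $\bar y_j \in I/I^2$ in degree $1$. A short verification using $I^n M = \sum_j I^n m_j$ (for any $T$-module generators $m_1,\ldots,m_p$ of $M$) shows that $N$ is generated, as a graded $G$-module, by the images $\bar m_j \in M/IM$ concentrated in degree zero; so $N$ is a finitely generated graded $G$-module.

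\textbf{Step 2 (Graded generic freeness).} I would next invoke a graded enhancement of Lemma \ref{genfree}: for a finitely generated graded module $N$ over a finitely generated graded $A$-algebra $G$, there exists a single nonzero $g \in A$ such that every graded piece $N_n \otimes_A A_g$ is a free $A_g$-module. This is obtained by running the Noetherian-induction proof of Grothendieck's lemma inside the category of graded $G$-modules, producing a finite filtration of $N$ by graded submodules whose successive quotients are shifted cyclic graded modules of the form $(G/\mathfrak p_i)(d_i)$. Using graded Noether normalization over $A$ after inverting a nonzero $g_i \in A$, one presents each $G/\mathfrak p_i$ as a graded-finite module over a graded polynomial algebra $A_{g_i}[y_1,\ldots,y_e]$ with homogeneous variables; each graded piece of $(G/\mathfrak p_i)(d_i)$ is then $A_{g_i}$-free. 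Taking $g$ to be the product of the finitely many $g_i$'s gives a single localization that makes every graded piece of $N$ free.

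\textbf{Step 3 (Climbing the filtration).} Finally, I would induct on $n$ using the short exact sequences
$$
0 \to I^n M/I^{n+1}M \to M/I^{n+1}M \to M/I^n M \to 0.
$$
The base case $n = 1$ gives $M/IM = N_0$, which is free over $A_g$ by Step 2. For the inductive step, if $M/I^n M \otimes_A A_g$ is $A_g$-free and hence $A_g$-projective, the tensored sequence splits; therefore $M/I^{n+1} M \otimes_A A_g$ is the direct sum of two free $A_g$-modules and is itself $A_g$-free.

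\textbf{Main obstacle.} The delicate point is the graded enhancement of generic freeness in Step 2. A plain invocation of Lemma \ref{genfree} applied to $N$ as a module over the $A$-algebra $G$ yields only that $N \otimes_A A_g$ is free as an $A_g$-module, whereupon each graded piece is only a direct summand of a free module and so merely projective over $A_g$. Since finitely generated projective modules over the Noetherian domain $A_g$ need not be free, and one cannot invert further elements of $A$ separately for each of the infinitely many graded pieces, it is essential that the generic freeness be obtained with a homogeneous basis; this requires that the dévissage into cyclic quotients and the Noether normalization step both be carried out in the graded category.
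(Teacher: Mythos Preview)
Your proof follows exactly the same route as the paper's: form the associated graded module $\gr_I M$, apply Grothendieck's generic freeness to it as a module over the finitely generated $A$-algebra $\gr_I T$, and then induct along the short exact sequences
\[
0 \to I^nM/I^{n+1}M \to M/I^{n+1}M \to M/I^nM \to 0.
\]
The one difference is that the paper, having obtained that $\gr_I M \otimes_A A_g$ is free over $A_g$, simply asserts that ``since $\gr_I M$ is graded over $\gr_I T$ and $A$ acts in degree zero, clearly its graded pieces are also free after tensoring with $A_g$.'' You correctly flag in your \emph{Main obstacle} that a graded piece is a priori only a direct summand of a free module, hence projective, and that over a general Noetherian domain this does not immediately give freeness; your remedy---running the d\'evissage in the graded category so as to produce a homogeneous basis---is a legitimate way to close this gap. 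In short, your argument is the paper's argument, but more scrupulous at precisely the step the paper glosses over.
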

  
   \begin{Rmk}  We will make use of Lemma \ref{moregenfree} in the case where $T = A[[x_1, \dots, x_n]]$.
    \end{Rmk}

  \begin{proof}
  If $M$ is finitely generated over $T$, then the associated graded module
  $$ gr_IM = M/IM \oplus IM/I^2M \oplus I^2M/I^3M \oplus \dots
  $$ is finitely generated over the 
  associated graded ring $gr_IT = T/I \oplus  I/I^2 \oplus I^2/I^3 \oplus \dots$, which is a homomorphic image of a  polynomial  ring over $T/I$. Hence
   $gr_IT $ is a finitely generated $A$-algebra. 
  Applying the Lemma of generic freeness to the  $gr_IT$-module  $ gr_IM$, we see that after inverting a single  non-zero element  of  $g$ of $A$, the module 
  $ gr_IM$ becomes $A$-free. Since 
  $ gr_IM$ is graded over  $gr_IT$ and 
  $A$ acts in degree zero, clearly  its  graded pieces  are also free after tensoring with  $A_g$. 
  We can thus replace $A$ by $A_g$ for suitable $g$, and assume that the $I^nM/I^{n+1}M$ are $A_g$-free for all $n\geq 0$.

  Now consider the short exact sequences
\begin{equation}\label{finally}
 0  \rightarrow I^nM/I^{n+1}M \rightarrow M/I^{n+1}M \rightarrow M/I^{n}M \rightarrow 0,
 \end{equation}
for each $n \geq 1$.
We already know that $M/I^{1}M$ and  $I^nM/I^{n+1}M$ for all $n \geq 1$ are  free (over $A_g$), so by induction, we conclude that the sequences (\ref{finally})  are all split over $A_g$  for every $n$. In particular, the modules  $M/I^{n}M$ are also free  over $A_g$ for all $n \geq 1$.
The lemma is proved.
  \end{proof}

\begin{Prop}\label{matlisdualcommute}  Let $A$ be a Noetherian domain. Let $R$ be any Noetherian $A$-algebra with ideal $I \subset R$ such that $R/I$ is a finitely generated $A$-module. Then for any Noetherian $R$-module $M$, the relative Matlis dual 
$$
\Hom_A^{cts}(M, A)
$$
is a  generically free $A$-module. 
Furthermore, if $g \in A$ is a non-zero element such that $A_g \otimes_A 
\Hom_A^{cts}(M, A)
$ is free over $A_g$, then 
for any base change 
$A \rightarrow  L$ factoring through $A_g$,  the natural map 
$$
\Hom_A^{cts}(M, A) \otimes_A L \rightarrow 
\Hom_L^{cts}(M \otimes_A L, L)
$$
is an isomorphism of $R \otimes_A L$-modules, functorial in $M$. 
\end{Prop}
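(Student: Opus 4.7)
The plan is to use Lemma \ref{moregenfree} to simultaneously trivialize all the quotients $M/I^nM$ over a single localization $A_g$, then exploit $A_g$-module splittings of the $I$-adic filtration to rewrite the relative Matlis dual as a direct sum of free $A_g$-modules, and finally verify base change term by term.

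First I would check that each $M/I^nM$ is a finitely generated $A$-module. Indeed, $R/I^n$ carries a filtration with successive quotients $I^k/I^{k+1}$, each finitely generated over $R/I$ and hence over $A$; so $R/I^n$, and consequently $M/I^nM$, is finitely generated over $A$. Lemma \ref{moregenfree} then provides a single nonzero $g \in A$ such that every $M/I^nM$ — and hence, via the short exact sequences below, every $I^nM/I^{n+1}M$ — becomes a finitely generated free $A_g$-module.

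Next, because $M/I^nM$ is $A_g$-free, the short exact sequence
$$0 \to I^nM/I^{n+1}M \to M/I^{n+1}M \to M/I^nM \to 0$$
splits as a sequence of $A_g$-modules for every $n$. Iterating these (noncanonical) splittings gives an $A_g$-isomorphism, compatible with the transition maps in the direct system $\{\Hom_A(M/I^nM, A)\}_n$, of the form
$$\Hom_{A_g}\bigl(M/I^{n+1}M \otimes_A A_g,\, A_g\bigr) \;\cong\; \bigoplus_{k=0}^{n}\Hom_{A_g}\bigl(I^kM/I^{k+1}M \otimes_A A_g,\, A_g\bigr).$$
Passing to the direct limit and using that tensor product commutes with direct limits yields
$$\Hom_A^{cts}(M,A)\otimes_A A_g \;\cong\; \bigoplus_{k\geq 0}\Hom_{A_g}\bigl(I^kM/I^{k+1}M \otimes_A A_g,\, A_g\bigr),$$
a direct sum of finitely generated free $A_g$-modules, so $\Hom_A^{cts}(M,A)$ is generically free.

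Finally, for the base change statement, fix such a $g$ and let $A \to L$ factor through $A_g$. Since $M/I^nM \otimes_A A_g$ is finitely generated free over $A_g$, the natural map
$$\Hom_A(M/I^nM, A)\otimes_A L \;\longrightarrow\; \Hom_L\bigl((M/I^nM)\otimes_A L,\, L\bigr)$$
is an isomorphism for every $n$. Identifying $(M/I^nM)\otimes_A L$ with $(M\otimes_A L)/I^n(M\otimes_A L)$ and passing to the direct limit gives the claimed isomorphism $\Hom_A^{cts}(M,A)\otimes_A L \xrightarrow{\sim}\Hom_L^{cts}(M\otimes_A L,\, L)$, which is functorial in $M$ at every stage. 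The main obstacle is that a filtered colimit of free modules need not be free; here the obstruction is killed by the existence of $A_g$-splittings of the $I$-adic filtration on $M$, which is exactly the strong, uniform-in-$n$ form of generic freeness supplied by Lemma \ref{moregenfree}.
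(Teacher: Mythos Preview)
Your proof is correct and follows essentially the same route as the paper: invoke Lemma~\ref{moregenfree} to make all $M/I^nM$ simultaneously free of finite rank over $A_g$, observe that the resulting transition maps in the dual system are split injections of finite free modules, conclude the colimit is free, and deduce base change from the fact that dualizing a finite free module commutes with tensor. The paper phrases the freeness step slightly more tersely (arguing directly that any linear relation in the colimit already holds in some finite free stage) while you make the direct-sum decomposition $\bigoplus_{k\ge 0}\Hom_{A_g}(I^kM/I^{k+1}M,\,A_g)$ explicit, but this is the same idea unpacked.
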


\begin{proof} We can invert one element of $A$ so that each  $M/I^tM$ is   free over $A$; replace $A$ by this localization. 
We  now claim that the $A$-module
$$
\Hom_A^{cts}(M, A) = \varinjlim_{t} 
\Hom_A\left(\frac{M}{I^tM}, A\right)$$ is  free.  Indeed, since each  $M/I^tM$ is   free  and has finite rank over $A$, its $A$-dual  $\Hom_A\left(\frac{M}{I^tM}, A\right)$ is also free of finite rank. 
The direct limit is also $A$-free because the maps in the limit system are all split over $A$ and injective. Indeed,   if some finite  $A$-linear combination of $f_i \in \Hom_A^{cts}(M, A) $ is zero, then that same combination is zero considered as elements of the free-submodule $\Hom_A\left(\frac{M}{I^tM}, A\right)$ of homomorphisms in $\Hom_A^{cts}(M, A) $ killed by  a large power of $I. $
It
 follows that
$
\Hom_A^{cts}(M, A)
$ is free over $A$, as desired.

The result on base change follows as well,  since tensor commutes with direct limits and with dualizing  a finitely generated free module. 
\end{proof}

\begin{Rmk} We can interpret Proposition \ref{matlisdualcommute} as saying that {\it generically on $A$}, the relative Matlis dual functor (applied to Noetherian $R$-modules) is exact and commutes with base change. 
\end{Rmk}

\section{Relative Local Duality and the Proof of the Main Theorem}

The proof Theorem \ref{main2} and therefore of our main result answering Koll\'ar's question, follows from a relative version of Local Duality:

\medskip
\begin{Thm}\label{locDual}  Let $R$ be a power series  ring $A[[x_1, \dots, x_n]]$ over a  Noetherian domain $A$, and let $M$ be a finitely generated $R$-module.
Then, after replacing $A$ by its localization at one element,  there is a functorial isomorphism for all $i$
$$H^i_I(M) \cong [\Ext^{n-i}_{R}(M, \, R)]^{\vee_A{}}.$$
\end{Thm}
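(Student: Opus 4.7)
The plan is to mimic the classical proof of local duality over a Gorenstein complete local ring, with the injective hull of the residue field replaced by the relative Matlis dual $R^{\vee_A}$; a generic localization of $A$ enters only to force the relative Matlis dual functor to be exact on the complexes that arise.

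First I would identify the ``dualizing object'' $H^n_I(R)$ with $R^{\vee_A}$. Since $A$ is a domain, $R = A[[x_1,\dots,x_n]]$ is a domain and $x_1,\dots,x_n$ is a regular sequence, so the \v{C}ech complex $\check C^\bullet$ on $x_1,\dots,x_n$ has vanishing cohomology in degrees $i<n$, while $H^n_I(R)$ is the $A$-free module with basis the ``inverse monomials'' $x^{-\alpha}$, $\alpha \in \N_{>0}^n$. The index shift $\alpha \mapsto \alpha-(1,\dots,1)$ then identifies $H^n_I(R)$, as an $R$-module, with $R^{\vee_A} = \varinjlim_t \Hom_A(R/I^t, A)$, whose $A$-basis is dual to the monomials $\{x^\beta : \beta \in \N^n\}$. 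Because $\check C^\bullet$ is a bounded complex of flat $R$-modules with cohomology concentrated in degree $n$ (where it equals $R^{\vee_A}$), a standard hyperhomology argument yields, for any finitely generated $R$-module $M$,
$$H^i_I(M) \;=\; H^i(\check C^\bullet \otimes_R M) \;\cong\; \Tor^R_{n-i}(M,\, R^{\vee_A}).$$
No localization of $A$ is required up to this point.

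Next I would convert the $\Tor$ into the desired $\Ext$, and this is where the generic localization of $A$ enters. Fix a resolution $P_\bullet \to M$ by finitely generated free $R$-modules. For each $P_k$, the adjunction $\Hom_R(\Hom_R(P_k, R), N) \cong P_k \otimes_R N$ (valid because $P_k$ is finitely generated free) combined with Proposition \ref{lem1}(1) gives a natural isomorphism
$$P_k \otimes_R R^{\vee_A} \;\cong\; \Hom_R\!\big(\Hom_R(P_k, R),\, R^{\vee_A}\big) \;\cong\; \Hom_A^{cts}\!\big(\Hom_R(P_k, R),\, A\big),$$
and these assemble into an isomorphism of complexes. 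The main obstacle is to commute $\Hom_A^{cts}(-,A)$ past the cohomology of $\Hom_R(P_\bullet, R)$. By Proposition \ref{lem1}(2), that commutation holds along any short exact sequence whose rightmost term $M_3$ satisfies that $M_3/I^n M_3$ is locally free over $A$ for $n \gg 0$. I would therefore break $\Hom_R(P_\bullet, R)$ into the usual short exact sequences involving its cocycles, coboundaries, and cohomology modules in the finitely many cohomological degrees $0, \dots, n+1$, which suffice to compute $\Ext^{n-i}_R(M, R)$ for $0 \le i \le n$. Each such module is a Noetherian $R$-module, so Lemma \ref{moregenfree} applied to this finite list produces a single nonzero $g \in A$ over which all the required freeness properties hold simultaneously. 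After inverting $g$, the relative Matlis dual commutes with cohomology on this complex, so
$$\Tor^R_{n-i}(M,\, R^{\vee_A}) \;\cong\; \Hom_A^{cts}\!\big(\Ext^{n-i}_R(M, R),\, A\big) \;=\; [\Ext^{n-i}_R(M, R)]^{\vee_A},$$
and combining this with the first display above finishes the proof. Functoriality in $M$ is automatic, as every map used is natural in $M$.
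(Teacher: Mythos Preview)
Your proposal is correct and follows essentially the same route as the paper: identify $H^n_I(R)\cong R^{\vee_A}$ via inverse monomials, use the \v{C}ech complex as a flat $R$-resolution of $H^n_I(R)$ to realize $H^i_I(M)$ as $\Tor^R_{n-i}(M,R^{\vee_A})$, then take a free resolution $P_\bullet$ of $M$, invoke the double-dual/adjunction identification $P_k\otimes_R R^{\vee_A}\cong \Hom_A^{cts}(\Hom_R(P_k,R),A)$, and apply Lemma~\ref{moregenfree} to the finitely many cocycle/coboundary/cohomology modules of $\Hom_R(P_\bullet,R)$ so that Proposition~\ref{lem1}(2) lets the relative Matlis dual commute with cohomology after a single localization. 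The paper phrases the decomposition in terms of the images $D_i$ and the $\Ext$ modules rather than cocycles and coboundaries, but this is the same splitting of the complex into short exact sequences.
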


To prove Theorem \ref{locDual}, we need the following  lemma.
\begin{lemma}
 Let $R$ be a power series  ring $A[[x_1, \dots, x_n]]$ over a ring $A$.
There is a natural $R$-module isomorphism $\Hom_A^{cts}(R, A)  \cong H^n_I(R)$, where $I = (x_1, \dots, x_n)$. In particular, the relative Matlis dual functor can also be expressed
$$
M \mapsto 
\Hom_{R}(M,  H^n_I(R)).$$ 
\end{lemma}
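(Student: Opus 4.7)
The plan is to compute both sides explicitly as free $A$-modules with $R$-action and exhibit the obvious natural map between them. Both modules will turn out to have $A$-bases indexed by $\N^n$, and the identification will match these bases term by term.

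First I would present $H^n_I(R)$ via the \v{C}ech complex on the regular sequence $x_1,\dots,x_n$, which expresses it as the cokernel
$$
H^n_I(R) \;=\; R_{x_1\cdots x_n}\bigg/\sum_{i=1}^n R_{x_1\cdots\widehat{x_i}\cdots x_n}.
$$
As an $A$-module this has a basis consisting of the strictly negative monomials $x_1^{-a_1}\cdots x_n^{-a_n}$ with every $a_j\geq 1$, since any Laurent monomial with some $a_i\leq 0$ lies in one of the summands being killed. The $R$-action is the evident one: $x_j$ lowers $a_j$ by one and annihilates the element once $a_j=1$. Next I would unwind $\Hom_A^{cts}(R,A)=\varinjlim_t\Hom_A(R/I^t,A)$. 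Since $R/I^t$ is a free $A$-module on monomials of total degree less than $t$, each $\Hom_A(R/I^t,A)$ is free on dual monomials $\phi_\alpha^{(t)}$, and the transition maps (induced by the surjections $R/I^{t+1}\twoheadrightarrow R/I^t$) send $\phi_\alpha^{(t)}\mapsto \phi_\alpha^{(t+1)}$. Passing to the limit gives the free $A$-module on $\{\phi_\alpha\}_{\alpha\in\N^n}$ characterized by $\phi_\alpha(x^\beta)=\delta_{\alpha,\beta}$.

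The candidate map is $\Phi\colon\Hom_A^{cts}(R,A)\to H^n_I(R)$ sending $\phi_\alpha$ to $1/(x_1^{\alpha_1+1}\cdots x_n^{\alpha_n+1})$, extended $A$-linearly; by the explicit descriptions just above, $\Phi$ is visibly an $A$-module isomorphism. The one real check is $R$-linearity, which reduces (by $A$-linearity and the fact that $x_1,\dots,x_n$ topologically generate $R$ over $A$) to verifying compatibility with multiplication by each $x_j$ on a basis element. On the $\Hom$ side, $(x_j\phi_\alpha)(x^\beta)=\phi_\alpha(x^{\beta+e_j})$ equals $\phi_{\alpha-e_j}$ when $\alpha_j\geq 1$ and $0$ when $\alpha_j=0$; on the $H^n_I$ side, $x_j\cdot 1/x^{\alpha+\mathbf{1}}$ equals $1/x^{(\alpha-e_j)+\mathbf{1}}$ when $\alpha_j\geq 1$ and otherwise becomes a Laurent monomial with no negative power of $x_j$, hence lies in $R_{x_1\cdots\widehat{x_j}\cdots x_n}$ and vanishes in the cokernel. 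These agree, so $\Phi$ is $R$-linear and the desired isomorphism.

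The final ``In particular'' statement is then immediate from part (1) of Proposition \ref{lem1}, which gives the natural equivalence $\Hom_A^{cts}(-,A)\cong\Hom_R(-,\Hom_A^{cts}(R,A))$. I do not anticipate any real obstacle; the only subtlety is the bookkeeping needed to match the two explicit bases and verify $R$-equivariance on generators.
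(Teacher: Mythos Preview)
Your proposal is correct and follows essentially the same approach as the paper: both compute $H^n_I(R)$ via the \v{C}ech complex as the free $A$-module on strictly negative monomials, identify $\Hom_A^{cts}(R,A)$ with the free $A$-module on the dual basis $\{\phi_\alpha\}$, and match the two via $\phi_\alpha\leftrightarrow x^{-\alpha-\mathbf{1}}$. The only cosmetic differences are that you write the isomorphism in the opposite direction and spell out the $x_j$-equivariance more explicitly than the paper (which simply remarks that $\Phi$ is ``premultiplication by $x^{\alpha+\mathbf{1}}$''); your invocation of Proposition~\ref{lem1}(1) for the final clause is exactly what the paper intends.
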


\begin{proof} 
We recall that  $H^i_I(R)$  is the $i$-th cohomology of 
 the extended \v{C}ech complex  $K^{\bullet}$  on the elements $x_1, \dots, x_n$. This is the  complex 
$$
0 \rightarrow R  \overset{\delta_1}\rightarrow {R}_{x_1} \oplus {R}_{x_2}  \cdots  \oplus {R}_{x_n} 
\rightarrow \bigoplus_{i<j} {R}_{x_i x_j} \rightarrow \cdots \overset{\delta_n}\longrightarrow  {R}_{x_1x_2\cdots x_n} \rightarrow 0
$$ where the maps are (sums of) suitably signed  localization maps. 
In particular, $H^n_I(R)$ is the cokernel of $\delta_n$, which can be checked to be the free $A$-module on (the classes of) the monomials $x_1^{a_1}\dots x_n^{a_n}$ with all $a_i<0$. {\footnote{See page 226 of \cite{Hartshorne}, although I have included one extra map  $\delta_1: R \rightarrow \bigoplus {R}_{x_i} $ sending  $ f \mapsto (\frac{f}{1}, \dots,  \frac{f}{1})$ in order to make the complex exact on the left,  and my  ring is a power series ring over $A$ instead of a  polynomial rings over a field. This is also discussed in \cite{LocalCohomology} page 22.}}

Now define an explicit $R$-module isomorphism  $\Phi$ from   $H^n_I(R)$  to $ \Hom_A^{cts}(R, A)$ by sending the (class of the) monomial 
$x^{\alpha}$ to the map  $\phi_{\alpha} \in  \Hom_A^{cts}(R, A) $:
$$ 
\,\,\,\,\,\,\,\,\,\,\, R  \overset{\phi_{\alpha}}\longrightarrow A \,\,\,\,\,\,\,\,\,\,\,\,\,\,\,\,\,\,\,\,\,\,\,\,\,\,\,\,\,\,\,\,\, \,\,\,\,\,\,\,\,\,\,\,\,\,\,\,\,\,\,\,\,\,\,\,\,\,\,\,\,\,\,\,\,\, \,\,\,\,\,\,\,\,\,\,\,\,\,\,\,\,\,\,\,\,\,\,\,\,\,\,\,\,\,\,\,\,\,\,\,\,\,\,\,\,\,\,\,\,\,\,\,\,\,\,\,\,\,\,\,\,\,\,\,
 $$
 $$
    x_1^{b_1} \dots x_n^{b_n} \mapsto  
    \begin{cases}    x_1^{b_1+a_1+1} \dots x_n^{b_n+a_n+1}  \mod I  
    &\mbox{if }  \alpha_i + \beta_i \geq -1  \mbox{  for all  }  i  \\ 
0  \mbox{ otherwise} \end{cases} 
 $$
 Since  $\{x^{\beta}\,|\, \beta \in \mathbb N^n\}$  is an $A$-basis for $R$,  the map $ \phi_{\alpha}$ is a well-defined $A$-module map from $R$ to $A$, and since it sends all but finitely many $x^{\beta}$ to zero, it is $I$-adically continuous. Thus the map
  $H^n_I(R) \overset{\Phi}\longrightarrow  \Hom_A^{cts}(R, A)$ is is an $A$-module 
 homomorphism; in fact, $\Phi$ is an $A$-module
 isomorphism, since it defines a bijection between the $A$-basis  $\{x^{\alpha}\,|\, \alpha_i <0\}$ for $H^n_I(R)$ and 
$\{\pi_{\beta}  \,|\, \beta_i \geq 0\} $ for  $ \Hom_A^{cts}(R, A)$ (meaning the dual basis on the free basis $x^{\beta}$ for $R$ over $A$)
 matching the indices   $\alpha_i \leftrightarrow \beta_i = -\alpha_i - 1 $.
It is easy to check that $\Phi$  is also $R$-linear, by thinking of it as ``premultiplication by $x^{\alpha+1}$" on the cokernel of  $\delta_n$. Thus $\Phi$ identifies the $R$-modules
$H^n_I(R)$ and  $ \Hom_A^{cts}(R, A).$ 
 \end{proof}

\begin{proof}[Proof of Theorem \ref{locDual}] We proceed by proving that both modules are  generically  isomorphic to a third, namely $\Tor_{n-i}^{R}(M, H^n_I(R)).$

First, recall how to compute   $H^i_I(M)$. 
Let $K^{\bullet}$ be the extended \v{C}ech complex on the elements $x_1, \dots, x_n$ 
$$
0 \rightarrow R  \overset{\delta_1}\rightarrow {R}_{x_1} \oplus {R}_{x_2}  \cdots  \oplus {R}_{x_n} 
\rightarrow \bigoplus_{i<j} {R}_{x_i x_j} \rightarrow \cdots \overset{\delta_n}\longrightarrow  {R}_{x_1x_2\cdots x_n} \rightarrow 0.
$$ 
This is a complex of flat $R$-modules, all free over $A$, exact at every spot except the right end. Thus it is a
 flat $R$-module resolution of  the local cohomology module  $H^n_I(R)$.
 The local cohomology module
$H^i_I(M)$ is the cohomology of this complex after tensoring over $R$ with $M$, that is
$$
H^i_I(M) = \Tor_{n-i}^{R}(M, H^n_I(R)).
$$

On the other hand, let us compute the relative Matlis dual of $\Ext^{n-i}_{R}(M, R)$. Let $P_{\bullet}$ be a free resolution of $M$ over $R$.  The module $\Ext^{\bullet}_{R}(M, R)$ is the cohomology of the complex 
 $\Hom_{R}(P_{\bullet}, R).$  
We would like to say that the computation of the cohomology of this complex commutes with the relative Matlis dual  functor, but the best we can say  is that this is true {\it generically on $A$}. To see this, we will apply Lemma  \ref{moregenfree} to the following finite set of $R$-modules: 
\begin{itemize}
\item  For $i = 0, \dots, n$,  the image $D_i$ of the $i$-th map of the complex $\Hom_{R}(P_{\bullet}, R)$;
\item For $i = 0, \dots, n$,  the  cohomology $Ext^{n-i}_{R}(M, R)$ of the same complex.
\end{itemize}
Lemma \ref{moregenfree} guarantees that the  modules 
$$D_i/I^t D_i \,\,\,\,\,\,{\text{and}}\,\,\,\,\,\, \Ext^{n-i}_{R}(M, R)/I^t Ext^{n-i}_{R}(M, R)$$ are all simultaneously generically free over $A$ for all $t \geq 1$. 
  This allows us to break up the complex 
 $A_g \otimes_A \Hom_{R}(P_{\bullet}, R)$ into many short exact sequences, split over $A_g$, which satisfy the hypothesis of 
Proposition \ref{lem1}(2) (using $A_g$ in place of $A$ and $A_g\otimes_A R$ in place of $R$). It follows that the computation of cohomology of  $\Hom_{R}(P_{\bullet}, R)$ commutes with the relative  Matlis dual functor (generically on $A$).
 
  Thus, after replacing $A$ by a localization at one element, 
$\Ext_{R}^{n-i}(M, R)]^{\vee_A}$ is the cohomology of the complex
$$
\Hom_{R}(\Hom_{R}(P_{\bullet}, R), H_I^n(R)).$$
In general, for any finitely generated projective module $P$ and any module  $H$ (over any Noetherian ring $R$), the natural map 
$$
P \otimes H \rightarrow Hom(Hom(P, R), H)$$
sending a simple tensor $x\otimes h$ to the map which sends $f\in Hom(P, R) $ to $f(x)h$, is an isomorphism, functorially in $P$ and $H$. 
Thus we have a natural isomorphism of complexes 
$$
 P_{\bullet} \otimes H^n_I(R) \cong \Hom_{R}(\Hom_{R}(P_{\bullet}, R), H_I^n(R)),
$$ and so
 $[\Ext^{n-i}(M,  R)]^{\vee_A}$ is identified with $\Tor_{n-i}(M, H^n_I(R))$, which as we saw is identified with $H^i_I(M)$.

Since all identifications are functorial, we have proven the relative local duality 
$H^i_I(M) \cong [\Ext^{n-i}(M, R)]^{\vee_A{}}, $ generically on $A$.

\end{proof}

\bigskip
We can finally finish the proof of Theorem \ref{free+comm.main.ptop.karen}, and hence the main result: 

\begin{proof}[Proof of Theorem \ref{main2}] Let $R$ be a power series ring over a Noetherian domain $A$, and let $M$ be any Noetherian $R$-module. We need to show that 
 the local cohomology modules 
$H^i_I(M)$ are  generically free and commute with base change over $A$.

In light of Corollary \ref{matlisdualcommute} , we can accomplish this by  showing that $H^i_I(M)$ is the relative Matlis dual of a Noetherian $R$-module, generically on $A$. 
But this is guaranteed by the relative local duality theorem Theorem \ref{locDual}, which guarantees that 
$$H^i_I(M) \cong \Ext_R^{n-i}(M, R)^{\vee A}$$
generically on $A$. 

\end{proof}
\begin{Rmk}
One could obviously develop the theory of relative Matlis duality, especially Theorem \ref{locDual},  further; I wrote down only the simplest possible case and the simplest possible statements needed to answer  Koll\'ar's question as directly as possible. 
\end{Rmk}
\bigskip

\end{document}